\tikzset{->-/.style={decoration={
  markings,
  mark=at position #1 with {\arrow{>}}},postaction={decorate}}}
\newtheorem{theorem}{Theorem}
\newtheorem{proposition}[theorem]{Proposition}
\newtheorem{corollary}[theorem]{Corollary}
\theoremstyle{definition}
\newcommand{\blackvertex}[1]{\fill (#1) circle (2pt);}
\newcommand{\whitevertex}[1]{\fill (#1) circle (2pt);\fill[white] (#1) circle (1.5pt);}
\definecolor{blue}{rgb}{0,0,1}
\newcolumntype{I}{>{\centering\arraybackslash} m{.15\linewidth}} 
\newcolumntype{A}{>{\centering\arraybackslash} m{.18\linewidth}} 
\newcolumntype{V}{>{\centering\arraybackslash} m{.22\linewidth}} 
\newcolumntype{W}{>{\centering\arraybackslash} m{.28\linewidth}}
\newcounter{confignum}	
\newcounter{casenum}	
\newcounter{subcasenum}	
\numberwithin{subcasenum}{casenum}
\newcounter{subsubcasenum}	
\numberwithin{subsubcasenum}{subcasenum}
\newcounter{stagenum}
\newcommand{\chunion}[2]{(#1,#2)}
\def\mad{\operatorname{Mad}}
\begin{document}

\title{Choosability with union separation}
\author{
Mohit Kumbhat$^1$
\and
Kevin Moss$^1$
\and
Derrick Stolee$^{1,2}$
}

\begin{abstract}
List coloring generalizes graph coloring by requiring the color of a vertex to be selected from a list of colors specific to that vertex.
One refinement of list coloring, called \emph{choosability with separation}, requires that the intersection of adjacent lists is sufficiently small.
We introduce a new refinement, called \emph{choosability with union separation}, where we require that the union of adjacent lists is sufficiently large.
For $t \geq k$, a $(k,t)$-list assignment is a list assignment $L$ where $|L(v)| \geq k$ for all vertices $v$ and $|L(u)\cup L(v)| \geq t$ for all edges $uv$.
A graph is \emph{$(k,t)$-choosable} if there is a proper coloring for every $(k,t)$-list assignment.
We explore this concept through examples of graphs that are not $(k,t)$-choosable, demonstrating sparsity conditions that imply a graph is $(k,t)$-choosable, and proving that all planar graphs are $(3,11)$-choosable and $(4,9)$-choosable.
\end{abstract}

\maketitle

\footnotetext[1]{Department of Mathematics, Iowa State University, Ames, IA, U.S.A. \texttt{$\{$mkumbhat,kmoss,dstolee$\}$@iastate.edu}}
\footnotetext[2]{Department of Computer Science, Iowa State University, Ames, IA, U.S.A.}


\section{Introduction}

For a graph $G$ and a positive integer $k$, a \emph{$k$-list assignment} of $G$ is a function $L$ on the vertices of $G$ such that $L(v)$ is a set of size at least $k$.
An \emph{$L$-coloring} is an assignment $c$ on the vertices of $G$ such that $c(v) \in L(v)$ for all vertices $v$ and $c(u) \neq c(v)$ for all adjacent pairs $uv$.
A graph is \emph{$k$-choosable} if there exists an $L$-coloring for every $k$-list assignment $L$ of $G$, and $G$ is \emph{$k$-colorable} if there exists an $L$-coloring for the $k$-list assignment $L(v) = \{1,\dots,k\}$. The minimum $k$ for which $G$ is $k$-choosable is called the \emph{choosability} or the \emph{list-chromatic number} of $G$ and is denoted by $\chi_\ell(G)$.
Erd\H{o}s, Rubin, and Taylor~\cite{erdos1979choosability} and independently Vizing~\cite{Vizing} introduced the concept of list coloring and demonstrated that there exist graphs that are $k$-colorable but not $k'$-choosable for all $k' \geq k \geq 2$.
Since its introduction, choosability has received significant attention and has been refined in many different ways.



One refinement of choosability is called \emph{choosability with separation} and has received recent attention~\cite{DMWS14,choi,furedi,kl,skrekovski} since it was defined by Kratochv\'il, Tuza, and Voigt~\cite{krat}.
Let $G$ be a graph and let $s$ be a nonnegative integer called the \emph{separation} parameter.
A \emph{$(k,k-s)$-list assignment} is a $k$-list assignment $L$ such that $|L(u)\cap L(v)| \leq k-s$ for all adjacent pairs $uv$.
We say a graph $G$ is \emph{$(k,t)$-choosable} if, for any $(k,t)$-list assignment $L$, there exists an $L$-coloring of $G$.
As the separation parameter $s$ increases, the restriction on the intersection-size of adjacent lists becomes more strict.

We introduce a complementary refinement of choosability called \emph{choosability with union separation}.
A \emph{$(k,k+s)$-list assignment} is a $k$-list assignment $L$ such that $|L(u)\cup L(v)| \geq k+s$ for all adjacent pairs $uv$.
We similarly say $G$ is $(k,t)$-choosable to imply choosability with either kind of separation, depending on $t \leq k$ or $k < t$.
Observe that if $G$ is $(k,k+s)$-choosable, then $G$ is both $(k,k-r)$-choosable and $(k,k+r)$-choosable for all $r \geq s$.
Note that if $L$ is a $(k,k-s)$-list assignment, we may assume that $|L(v)| = k$ as removing colors from lists does not violate the intersection-size requirement for adjacent vertices.
However, when considering a $(k,k+s)$-list assignment, we may not remove colors from lists as that may violate the union-size requirement for adjacent vertices.
Due to this asymmetry, we do not know if there is a function $f(k,s)$ such that every $(k,k-s)$-choosable graph is also $(k,k+f(s))$-choosable.

Thomassen~\cite{thomassen} proved that all planar graphs are $5$-choosable.
The main question we consider regarding planar graphs and choosability with union separation is identifying minimum integers $t_3$ and $t_4$ such that all planar graphs are $(3,t_3)$-choosable and $(4,t_4)$-choosable.
We demonstrate that $6 \leq t_3 \leq 11$ and $6 \leq t_4 \leq 9$.

Kratochv\'il, Tuza, and Voigt~\cite{krat2} proved that all planar graphs are $(4,1)$-choosable and conjecture that all planar graphs are $(4,2)$-choosable.
Voigt~\cite{voigt1993list} constructed a planar graph that is not $(4,3)$-choosable and hence is not $(4,5)$-choosable.
We show that $t_4 \leq 9$.

\begin{theorem}\label{thm:49choosable}
All planar graphs are $\chunion{4}{9}$-choosable.
\end{theorem}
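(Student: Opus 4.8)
The plan is to prove a stronger precoloring-extension statement by induction, in the spirit of Thomassen's proof that planar graphs are $5$-choosable~\cite{thomassen}. The first step is to translate the union-separation hypothesis into a structural one. For any edge $uv$ we have $|L(u)| + |L(v)| \geq |L(u)\cup L(v)| \geq 9$, so at most one endpoint can have a list of size exactly $4$; hence the set $I$ of vertices with $|L(v)| = 4$ is independent and every other vertex satisfies $|L(v)| \geq 5$. Moreover the union bound controls the overlap across edges incident to $I$: if $v \in I$ and $u$ is a neighbor, then $|L(u)\cap L(v)| = |L(u)| + |L(v)| - |L(u)\cup L(v)| \leq |L(u)| - 5$, and in particular $L(u)\cap L(v) = \emptyset$ whenever $|L(u)| = 5$. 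These disjointness facts, rather than the raw list sizes, are what I expect to drive the argument, and the value $t = 9$ is precisely the threshold that forces $I$ to be independent, which is what makes $\chunion{4}{9}$ tractable.

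As usual, I would first reduce to the case that $G$ is a $2$-connected plane near-triangulation (every bounded face a triangle), since adding edges only makes coloring harder. The one point of care is to triangulate \emph{without} joining two vertices of $I$: because the vertices of $I$ on any face are pairwise nonadjacent, one can always triangulate each face by fanning from its non-$I$ vertices, thereby preserving both the independence of $I$ and the disjointness data recorded above. I would then prove an inductive lemma of the following shape: $G$ is a plane near-triangulation with outer cycle $C$, an edge $xy$ of $C$ is precolored with distinct colors, every interior vertex not in $I$ has $|L(v)|\geq 5$ while vertices of $I$ keep size $4$ together with the disjointness relations to their neighbors, and the non-precolored vertices of $C$ carry suitably reduced lists (roughly size $3$, with the union/disjointness bookkeeping retained on boundary edges). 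The claim is that every such precoloring extends to an $L$-coloring.

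The induction is on $|V(G)|$, following Thomassen's two reductions. If $C$ has a chord, split $G$ along it into two smaller near-triangulations sharing the chord, color the side containing $xy$ first, and use the colors appearing on the chord as the precolored edge for the other side. If $C$ is chordless, take the boundary neighbor $v$ of $x$ distinct from $y$, list its neighbors in rotational order as $x = w_0, w_1, \dots, w_m, w_{m+1}$ where $w_1,\dots,w_m$ are interior and $w_{m+1}$ is the next boundary vertex, choose two colors $a,b \in L(v)$ avoiding the color of $x$, delete $v$, delete $a$ and $b$ from the lists of $w_1,\dots,w_m$, apply induction to the smaller graph, and finally color $v$ with whichever of $a,b$ is not used on $w_{m+1}$. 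For interior neighbors $w_i \notin I$ this is the usual $5 \to 3$ drop and causes no trouble.

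The main obstacle is an interior neighbor $w_i \in I$, whose list has size only $4$ and would fall to $2$ if it lost both $a$ and $b$. This is exactly where the disjointness forced by $t = 9$ must be exploited: since $v$ is adjacent to the $I$-vertex $w_i$, the intersection $L(v)\cap L(w_i)$ has size at most $|L(v)|-5$, so the colors of $L(v)$ that actually lie in $L(w_i)$ are few, and I would like to select the removed pair $a,b$ from $L(v)\setminus L(w_i)$ so that $w_i$ loses nothing. The difficulty — and the step I expect to require the real work — is making these choices \emph{simultaneously} consistent when $v$ has several neighbors in $I$ (and also respecting the constraint that one of $a,b$ must avoid $w_{m+1}$), since naively forbidding every $I$-neighbor's list can exhaust $L(v)$. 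Resolving this will likely force a careful choice of which boundary vertex to delete, a refined invariant that permits $I$-vertices to shrink by at most one color while tracking their remaining obligations, and a verification that the disjointness relations survive both the chord split and the reduction to near-triangulations; this bookkeeping is the heart of the proof.
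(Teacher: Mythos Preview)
Your approach is entirely different from the paper's. The paper argues by discharging on a minimal counterexample: Proposition~\ref{reducible-1} forces every edge $uv$ of such a counterexample to satisfy $d(u)+d(v) > 9 + \min(|N(u)\cap N(v)|,2)$, and with charges $d(v)-4$ on vertices and $\ell(f)-4$ on faces, a single rule---each $5^+$-vertex distributes its surplus $d(v)-4$ evenly among its incident $3$-faces---leaves every vertex and face with nonnegative charge, contradicting the total of $-8$. No Thomassen-style induction appears.

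Your proposal is a plan rather than a proof, and the gap you flag is not mere bookkeeping. Two concrete obstacles stand out. First, the triangulation step is more destructive than you suggest: a diagonal $vw$ added inside a face carries \emph{no} union-separation guarantee, so if $w\in I$ there is no bound whatsoever on $|L(v)\cap L(w)|$; yet such added diagonals are exactly the fan edges $vw_i$ that arise in Thomassen's deletion step, so the disjointness you plan to exploit may be absent precisely where it is needed. Second, even along original edges, the reduced boundary list $L'(v)$ is a $3$-subset of $L(v)$ determined by earlier steps that knew nothing about the $I$-neighbors of $v$, and Thomassen's scheme gives essentially no freedom in which boundary vertex to delete---it is fixed as the neighbor of the precolored vertex $x$. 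Arranging that the chosen pair $a,b$ meets every $L(w_i)$ at most once, for all $I$-neighbors $w_i$ simultaneously, is a genuine combinatorial constraint; without a precise invariant that is actually shown to survive both the chord split and the vertex deletion, the induction does not close.
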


A \emph{chorded $\ell$-cycle} is a cycle of length $\ell$ with one additional edge.
For each $\ell \in \{5,6,7\}$, Berikkyzy \emph{et al.}~\cite{DMWS14} demonstrated that if $G$ is a planar graph that does not contain a chorded $\ell$-cycle, then $G$ is $(4,2)$-choosable.
The case $\ell = 4$ is notably missing from their results, especially since Borodin and Ivanova~\cite{borodin} proved that if $G$ is a planar graph that does not contain a chorded 4-cycle or a chorded 5-cycle, then $G$ is 4-choosable.
We prove that if $G$ is a planar graph containing no chorded 4-cycle, then $G$ is $(4,7)$-choosable (see Theorem~\ref{thm:47choosable}).

Kratochv\'il, Tuza, and Voigt~\cite{krat2} conjecture that all planar graphs are $(3,1)$-choosable.
Voigt~\cite{voigt} constructed a planar graph that is not $(3,2)$-choosable and hence is not $(3,4)$-choosable.
In Section~\ref{sec:counterexamples} we construct graphs that are not $(k,t)$-choosable, including a planar graph that is not $(3,5)$-choosable.
This hints towards a strong difference between intersection separation and union separation.
We show that $t_3 \leq 11$.

\begin{theorem}\label{thm:311choosable}
All planar graphs are $\chunion{3}{11}$-choosable.
\end{theorem}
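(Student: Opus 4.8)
The plan is to adapt Thomassen's inductive proof that planar graphs are $5$-choosable~\cite{thomassen} to union separation, letting the separation condition stand in for the colors that the small lists are missing. The starting point is the elementary observation that $|L(u)|+|L(v)|\ge |L(u)\cup L(v)|\ge 11$ for every edge $uv$, so a vertex $v$ with $|L(v)|\le 5$ can only have neighbors whose lists have size at least $11-|L(v)|\ge 6$. In particular the set $S=\{v:|L(v)|\le 5\}$ is independent, a list of size $3$ is always surrounded by lists of size at least $8$, a list of size $4$ by lists of size at least $7$, and a list of size $5$ by lists of size at least $6$. The moral is that the vertices that are hardest to color are exactly those whose neighborhoods give the most room, and the whole argument is about converting that room into colors.

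Following Thomassen, I would reduce to a plane near-triangulation $G$ with outer cycle $C$, precolor the two endpoints of a fixed edge $v_1v_2$ of $C$ with distinct colors, and prove by induction that the precoloring extends to an $L$-coloring whenever every non-precolored vertex of $C$ has a list of size at least $3$ and the relevant separation data holds in the interior. Two points need care already in this reduction, because---unlike ordinary list size---the union condition is not automatically inherited when edges are added: triangulating a face could create an edge $uv$ with $|L(u)\cup L(v)|<11$. Using that $S$ is independent, I would triangulate so that every edge incident to a small-list vertex is an original edge (clip ears at the vertices of $S$, which only adds edges between their large-list neighbors), so that the deduction ``small list $\Rightarrow$ large real neighbors'' survives into every subgraph I pass to. With the reduction in place, the base cases and the case where $C$ has a chord $v_iv_j$ (split $G$ into the two near-triangulations bounded by the two $v_iv_j$-paths and recurse on each) are routine.

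The substantive case is a chordless outer cycle. Let $v$ be the neighbor of $v_1$ on $C$ other than $v_2$, and let $u_1,\dots,u_m$ be its interior neighbors, forming a fan to the other cycle-neighbor of $v$. As in Thomassen's argument I would delete $v$, reserve colors of $L(v)\setminus\{c(v_1)\}$ to keep $v$ colorable, delete the reserved colors from the fan lists $L(u_1),\dots,L(u_m)$, apply induction to $G-v$, and finally color $v$ with a reserved color avoiding its last neighbor. When every $u_i$ has a list of size at least $5$ this is exactly Thomassen's step; the separation condition is precisely what I would use to control the fan vertices with smaller lists, since any $u_i$ with $|L(u_i)|=3$ forces $|L(v)|\ge 8$ and any $u_i$ with $|L(u_i)|=4$ forces $|L(v)|\ge 7$, so the removed vertex always has a large list from which to draw reserved colors that dodge the small fan lists.

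The hard part---and the reason the threshold is $11$ rather than something smaller---is making this fan step fully rigorous, and it is where the asymmetry the authors flag in the introduction bites: deleting reserved colors can shrink a large-list fan vertex into a small-list one, and for such a vertex the union condition with its new boundary neighbors is no longer guaranteed. I therefore expect to need a strengthened inductive invariant that records, for each boundary vertex, enough about its list beyond its mere size, together with a selection argument for the reserved colors that simultaneously avoids the lists of every size-$3$ fan vertex, removes at most one color from every size-$4$ fan vertex, and leaves two candidates for $v$ after forbidding $c(v_1)$. Balancing the size of $L(v)$ guaranteed by the separation condition against the total size of the small fan lists it must avoid is the crux of the whole proof, and carrying out that count---while verifying the strengthened invariant survives both the chord split and the fan deletion---is the step I expect to absorb essentially all of the work and to dictate the value $11$.
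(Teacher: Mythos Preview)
Your plan diverges entirely from the paper's: the paper gives a discharging proof. It takes a minimal counterexample, uses Proposition~\ref{reducible-1} to deduce that every edge $uv$ satisfies $d(u)+d(v)>11+\min(|N(u)\cap N(v)|,2)$, assigns charge $d(v)-6$ to vertices and $2\ell(f)-6$ to faces (total $-12$ by Euler), and discharges so that every vertex and face ends nonnegative. There is no induction on near-triangulations, no precoloring, and no fan step.

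More importantly, the ``crux'' you isolate does not balance, and this is a genuine obstruction rather than missing detail. Take $|L(v)|=10$ and a fan $u_1,\dots,u_8$, so $d(v)=10$. Give $u_1,u_3,u_5,u_7$ three-element lists, each meeting $L(v)$ in exactly two colors (this is permitted, since then $|L(u_i)\cup L(v)|=3+10-2=11$), with the four pairs pairwise disjoint inside $L(v)$; this forbids eight colors of $L(v)$. If in addition $c(v_1)\in L(v)$ is a ninth color, only one color of $L(v)$ survives, so you cannot reserve two. Nor can you fall back on coloring $v$ last greedily, since $|L(v)|=d(v)$. The size-$3$ fan vertices are pairwise nonadjacent along the fan path, so the independence of $S$ is respected, and all remaining separation constraints can be met by giving $u_2,u_4,u_6,u_8$ large lists. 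The same phenomenon recurs for every $|L(v)|\ge 10$ once the fan is long enough: separation only yields $|L(u_i)\cap L(v)|\le |L(v)|-8$, which \emph{grows} with $|L(v)|$, while the number of odd-indexed fan slots grows with $d(v)$, and nothing ties $d(v)$ to $|L(v)|$. No invariant recorded only at boundary vertices can repair this, because the obstruction already lives in the original list assignment before any deletion. This is exactly why the paper converts the union bound into the degree-sum inequality $d(u)+d(v)>11+a$ and feeds it to Euler's formula instead.
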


We also consider sparsity conditions that imply $(k,t)$-choosability.
For a graph $G$, the \emph{maximum average degree} of $G$, denoted $\mad(G)$, is the maximum fraction $\frac{2|E(H)|}{|V(H)|}$ among subgraphs $H \subseteq G$.
If $\mad(G) < k$, then $G$ is $(k-1)$-degenerate and hence is $k$-choosable. 
Since $\mad(K_{k+1}) = k$ and $\chi_\ell(K_{k+1})>k$, this bound on $\mad(G)$ cannot be relaxed.
In Section~\ref{sec:sparse}, we prove that  $G$ is $(k,t)$-choosable when $\mad(G) < 2k - o(1)$ where $o(1)$ tends to zero as $t$ tends to infinity.
This is asymptotically sharp as we construct graphs that are not $(k,t)$-choosable with $\mad(G) = 2k-o(1)$.

Many of our proofs use the discharging method.
For an overview of this method, see the surveys of Borodin~\cite{BorodinSurvey}, Cranston and West~\cite{CW}, or the overview in Berikkyzy \emph{et al.}~\cite{DMWS14}.
We use a very simple reducible configuration that is described by Proposition~\ref{reducible-1} in Section~\ref{sec:reducible}.

\subsection{Notation}

A (simple) graph $G$ has vertex set $V(G)$ and edge set $E(G)$. Additionally, if $G$ is a plane graph, then $G$ has a face set $F(G)$.
Let $n(G) = |V(G)|$ and $e(G) = |E(G)|$.
For a vertex $v \in V(G)$, the set of vertices adjacent to $v$ is  the \emph{neighborhood} of $v$, denoted $N(v)$.
The \emph{degree} of $v$, denoted $d(v)$, is the number of vertices adjacent to $v$.
We say $v$ is a $k$-vertex if $d(v) = k$, a $k^-$-vertex if $d(v) \leq k$ and a $k^+$-vertex if $d(v) \geq k$.
Let $G - v$ denote the graph given by deleting the vertex $v$ from $G$.
For an edge $uv \in E(G)$, let $G -uv$ denote the graph given by deleting the edge $uv$ from $G$.
For a plane graph $G$ and a face $f$, let $\ell(f)$ denote the length of the face boundary walk; say $f$ is a $k$-face if $\ell(f) = k$ and a $k^+$-face if $\ell(f) \geq k$.


\section{Non-$(k,t)$-Choosable Graphs}\label{sec:counterexamples}

\begin{proposition}\label{prop:counterexample}
For all $t \geq k \geq 2$, there exists a bipartite graph that is not $(k,t)$-choosable.
\end{proposition}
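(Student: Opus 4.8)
The plan is to reduce choosability with union separation back to ordinary choosability by exploiting a one-line observation: if a list assignment $L$ has $|L(v)| = t$ for every vertex, then for every edge $uv$ we automatically have $|L(u)\cup L(v)| \geq \max\{|L(u)|,|L(v)|\} = t$, so $L$ is a legitimate $(k,t)$-list assignment for every $k$ with $2 \le k \le t$. Consequently, any graph whose list-chromatic number exceeds $t$ fails to be $(k,t)$-choosable: starting from a $t$-list assignment admitting no proper coloring, I would trim every list down to exactly $t$ colors (deleting colors only shrinks the set of available colorings, so none is created), and the resulting assignment is a $(k,t)$-list assignment with no proper coloring. Thus it suffices to exhibit, for each $t \ge 2$, a bipartite graph $G$ with $\chi_\ell(G) > t$.

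For the concrete witness I would run the Erd\H{o}s--Rubin--Taylor~\cite{erdos1979choosability} complete bipartite construction with $t$ in place of $k$. Take the color universe $U = \{1,\dots,2t-1\}$ and let $G = K_{m,m}$ with $m = \binom{2t-1}{t}$, where both parts $A$ and $B$ are indexed by the $t$-subsets of $U$ and each vertex receives its indexing subset as its list. Every list has size $t$, so by the observation above this is a valid $(k,t)$-list assignment. To see that $G$ is not $L$-colorable, I would use that in a complete bipartite graph a proper $L$-coloring exists if and only if the color set used on $A$ and the color set used on $B$ can be taken disjoint; equivalently, there is a set $X \subseteq U$ meeting every $A$-list while its complement $U \setminus X$ meets every $B$-list.

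I would then close this with a pigeonhole dichotomy on $|X|$. If $|X| \le t-1$, then $|U \setminus X| \ge t$, so $U \setminus X$ contains some $t$-subset, i.e.\ some $A$-list is disjoint from $X$; if $|X| \ge t$, then $X$ contains some $t$-subset, i.e.\ some $B$-list lies inside $X$, so $U \setminus X$ misses that $B$-list. Since every $X \subseteq U$ falls into one of these two cases, no valid $X$ exists, $G$ admits no proper $L$-coloring, and hence the bipartite graph $G$ is not $(k,t)$-choosable.

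The main thing to get right --- and the only place where the union-separation hypothesis could bite --- is the interplay between list size and the union constraint. One is tempted to keep lists of size exactly $k$, but then for large $t$ the union condition would force adjacent lists to be nearly disjoint, and a complete bipartite graph with pairwise-disjoint lists is trivially colorable (each $B$-vertex has an available color no matter how $A$ is colored). So the construction must inflate the common list size up to $t$, which is precisely what makes the union condition automatic while preserving the classical obstruction. I expect no further obstacle: the verification is the standard counting argument, and bipartiteness is immediate since the witness is complete bipartite.
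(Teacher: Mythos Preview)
Your proof is correct but follows a genuinely different route from the paper's. You inflate every list to size exactly $t$ so that the union condition $|L(u)\cup L(v)|\ge t$ becomes automatic, and then invoke the classical Erd\H{o}s--Rubin--Taylor witness $K_{m,m}$ with $m=\binom{2t-1}{t}$ and lists equal to all $t$-subsets of $\{1,\dots,2t-1\}$. The paper instead builds a tailored bipartite graph: one side has $k$ vertices $u_1,\dots,u_k$ with pairwise disjoint lists of size $t-k+1$, the other side has one vertex $x_A$ for each transversal $A\in\prod_i L(u_i)$ with $L(x_A)=A$; any coloring of the $u_i$'s is itself such a transversal, so the corresponding $x_A$ is blocked. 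Your reduction is slicker and leans on an existing theorem, but the paper's construction carries quantitative content that yours discards: their graph has average degree $\frac{2k(t-k+1)^k}{k+(t-k+1)^k}\to 2k$, which is exactly what they need for the asymptotic sharpness of Theorem~\ref{thm:sparse}, whereas your $K_{m,m}$ has average degree $\binom{2t-1}{t}$; and for $k=2$ their graph is $K_{2,(t-1)^2}$, which is planar and yields the corollary on planar graphs, while your $K_{m,m}$ is non-planar for every $t\ge 2$.
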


\begin{proof}
Let $u_1,\dots,u_k$ be nonadjacent vertices and let $L(u_1),\dots,L(u_k)$ be disjoint sets of size $t-k+1$.
For every element $(a_1,\dots,a_k) \in \prod_{i=1}^k L(u_i)$, let $A = \{a_1,\dots,a_k\}$, create a vertex $x_A$ adjacent to $u_i$ for all $i \in [k]$, and let $L(x_A) = A$ (see Figure~\ref{fig:book}).
Notice that  $|L(u_i)\cup L(x_A)| = t$ for all $i \in [k]$ and all vertices $x_A$, so $L$ is a $(k,t)$-list assignment.
If there is a proper $L$-coloring $c$ of this graph, then let $A = \{ c(u_i) : i \in [k]\}$; the color $c(x_A)$ is in $A$ and hence the coloring is not proper.
\end{proof}

\begin{figure}[htp]
\centering
\begin{tikzpicture}[vtx/.style={shape=coordinate}]
	\node[vtx] (v0) at (-2.0000,1.0000) {};
	\node[vtx] (v1) at (-1.0000,1.0000) {};
	\node[vtx] (v2) at (0.0000,1.0000) {};
	\node[vtx] (v3) at (1.0000,1.0000) {};
	\node[vtx] (v4) at (2.0000,1.0000) {};
	\node[vtx] (v5) at (-3.5000,0.0000) {};
	\node[vtx] (v6) at (-2.5000,0.0000) {};
	\node[vtx] (v7) at (-1.5000,0.0000) {};
	\node[vtx] (v8) at (-0.5000,0.0000) {};
	\node[vtx] (v9) at (0.5000,0.0000) {};
	\node[vtx] (v10) at (1.5000,0.0000) {};
	\node[vtx] (v11) at (2.5000,0.0000) {};
	\node[vtx] (v12) at (3.5000,0.0000) {};
	\draw (v0) -- (v5);
	\draw (v0) -- (v6);
	\draw (v0) -- (v7);
	\draw (v0) -- (v10);
	\draw (v0) -- (v11);
	\draw (v0) -- (v12);
	\draw (v1)  (v5);
	\draw (v1) -- (v6);
	\draw (v1) -- (v7);
	\draw (v1) -- (v10);
	\draw (v1) -- (v11);
	\draw (v1) -- (v12);
	\draw (v2) -- (v5);
	\draw (v2) -- (v6);
	\draw (v2) -- (v7);
	\draw (v2) -- (v10);
	\draw (v2) -- (v11);
	\draw (v2) -- (v12);
	\draw (v3) -- (v5);
	\draw (v3) -- (v6);
	\draw (v3) -- (v7);
	\draw (v3) -- (v10);
	\draw (v3) -- (v11);
	\draw (v3) -- (v12);
	\draw (v4) -- (v5);
	\draw (v4) -- (v6);
	\draw (v4) -- (v7);
	\draw (v4) -- (v10);
	\draw (v4) -- (v11);
	\draw (v4) -- (v12);
	\foreach \p in {v0,v1,v2,v3,v4,v5,v6,v7,v10,v11,v12}
	{
		\blackvertex{\p}
	}
	\foreach \p in {-0.5,0,0.5}
	{
		\fill (\p,0) circle (1.5pt);
	}
	
	\draw[<->] (-3.5,-0.5)--(3.5,-0.5);
	\node[vtx,label=below:{$(t-k+1)^k$ vertices}] (label) at (0,-0.5) {};
	\draw[<->] (-2,1.5)--(2,1.5);
	\node[vtx,label=above:{$k$ vertices}] (label) at (0,1.5) {};
\end{tikzpicture}
\caption{\label{fig:book}A graph that is not $(k,t)$-choosable.}
\end{figure}
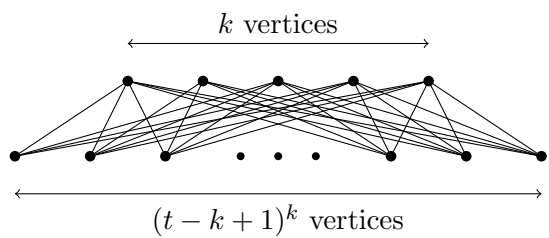

Observe that the graph constructed in Proposition~\ref{prop:counterexample} has average degree $\frac{2k(t-k+1)^k}{k+(t-k+1)^k}$; as $t$ increases, this fraction approaches $2k$ from below.
Observe that when $k=2$ the graph built in Proposition~\ref{prop:counterexample} is planar, giving us the following corollary.

\begin{corollary}
For all $t \geq 2$, there exists a bipartite planar graph that is not $(2,t)$-choosable.
\end{corollary}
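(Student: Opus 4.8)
The plan is to apply Proposition~\ref{prop:counterexample} directly with $k=2$ and then verify that the resulting graph is planar. Since the proposition already guarantees, for every $t \geq k \geq 2$, a bipartite graph that is not $(k,t)$-choosable, setting $k=2$ immediately produces a bipartite graph that is not $(2,t)$-choosable. The only thing that remains is to confirm that this particular graph can be drawn in the plane without crossings, which is exactly the content hinted at in the remark preceding the corollary (``when $k=2$ the graph built in Proposition~\ref{prop:counterexample} is planar'').

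First I would recall the structure of the graph $G$ from the construction when $k=2$. We have two nonadjacent vertices $u_1$ and $u_2$ with disjoint lists $L(u_1)$ and $L(u_2)$, each of size $t-1$. For every pair $(a_1,a_2) \in L(u_1) \times L(u_2)$ we create a degree-$2$ vertex $x_A$ (where $A = \{a_1,a_2\}$) adjacent to both $u_1$ and $u_2$. Thus $G$ consists of the two hubs $u_1, u_2$ together with $(t-1)^2$ internally disjoint paths of length $2$ joining them. In other words, $G$ is a ``theta-like'' multigraph of paths between two vertices, or equivalently a subdivision of a multigraph consisting of two vertices joined by $(t-1)^2$ parallel edges.

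Next I would argue planarity. A graph consisting of two vertices $u_1, u_2$ joined by any number of internally disjoint paths is planar: one can place $u_1$ and $u_2$ in the plane and draw the paths as nested arcs between them, with no two arcs crossing, precisely because each $x_A$ has degree $2$ and lies on its own arc. Formally, $G$ is a subdivision of the bipartite graph consisting of two vertices and parallel edges, and such a ``book of paths'' always admits a planar embedding (it contains no $K_5$ or $K_{3,3}$ minor, since contracting along the degree-$2$ vertices yields only a multigraph on two vertices, which is trivially planar). I would state this embedding explicitly to make the argument self-contained.

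The main (and only mild) obstacle is making the planarity claim rigorous rather than merely asserting it; however this is genuinely routine, since the two-terminal parallel-path structure is one of the simplest planar families. Combining the non-$(2,t)$-choosability guaranteed by Proposition~\ref{prop:counterexample} with this planarity observation yields the corollary for all $t \geq 2$.
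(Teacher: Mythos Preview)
Your proposal is correct and follows exactly the paper's approach: apply Proposition~\ref{prop:counterexample} with $k=2$ and observe that the resulting graph is planar. The paper states this in a single sentence without the explicit planarity argument, so your added justification (two hubs joined by internally disjoint length-$2$ paths) simply fleshes out what the paper leaves implicit.
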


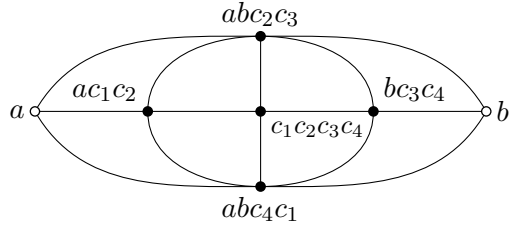
\begin{figure}[htp]
\centering
\begin{tikzpicture}[vtx/.style={shape=coordinate}]
	\node[vtx,label=left:{$a$}] (v0) at (-6.0000,0.0000) {};
	\node[vtx,label=right:{$b$}] (v1) at (0.0000,0.0000) {};
	\node[vtx,label=below:{$abc_4c_1$}] (v2) at (-3.0000,-1.0000) {};
	\node[vtx,label=above left:{$ac_1c_2$}] (v3) at (-4.5000,0.0000) {};
	\node[vtx,label=above:{$abc_2c_3$}] (v4) at (-3.0000,1.0000) {};
	\node[vtx,label=above right:{$bc_3c_4$}] (v5) at (-1.5000,0.0000) {};
	\node[vtx,label=below right:{\small $c_1c_2c_3c_4$}] (v6) at (-3.0000,0.0000) {};
	\draw (v0) to[out=-60,in=180] (v2);
	\draw (v0) -- (v3);
	\draw (v0) to[out=60,in=180] (v4);
	\draw (v1) to[out=-120,in=0] (v2);
	\draw (v1)  to[out=120,in=0] (v4);
	\draw (v1) -- (v5);
	\draw (v2) to[out=180,in=270] (v3);
	\draw (v2) -- (v6);
	\draw (v3) -- (v6);
	\draw (v4) to[out=0,in=90] (v5);
	\draw (v4) -- (v6);
	\draw (v5) -- (v6);
	\draw (v2) to[out=0,in=270] (v5);
	\draw (v3) to[out=90,in=180] (v4);
	\foreach \p in {v2,v3,v4,v5,v6}
	{
		\blackvertex{\p}
	}
	\foreach \p in {v0,v1}
	{
		\whitevertex{\p}
	}
\end{tikzpicture}
\caption{\label{fig:gadget35}A planar gadget with a $(3,5)$-list assignment.}
\end{figure}

We now construct a specific planar graph that is not $(3,5)$-choosable.

\begin{proposition}\label{prop:35counterexample}
There exists a planar graph that is not $(3,5)$-choosable.
\end{proposition}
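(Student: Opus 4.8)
The plan is to treat the graph in Figure~\ref{fig:gadget35} as a two-terminal \emph{gadget} and to combine several relabelled copies of it. Writing $\alpha,\beta$ for the colors named $a,b$ in the figure, the gadget has terminals $a$ and $b$ and the crucial property that it admits no proper $L$-coloring in which $c(a)=\alpha$ and $c(b)=\beta$ simultaneously. First I would prove this forcing property, then verify that the displayed lists together with suitable $3$-element lists on the terminals form a $(3,5)$-list assignment, and finally glue nine copies of the gadget between a single pair of vertices to produce a planar graph with no proper $L$-coloring.

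For the forcing property, suppose $c(a)=\alpha$ and $c(b)=\beta$. The five internal vertices form a wheel $W_4$: the vertex with list $\{c_1,c_2,c_3,c_4\}$ is a hub adjacent to the four ``rim'' vertices, which form a $4$-cycle in the cyclic order $\{a,b,c_4,c_1\},\{a,c_1,c_2\},\{a,b,c_2,c_3\},\{b,c_3,c_4\}$. Since three rim vertices are adjacent to $a$ and three are adjacent to $b$, deleting the forbidden colors $\alpha,\beta$ restricts the rim to the $2$-element lists $\{c_1,c_4\}$, $\{c_1,c_2\}$, $\{c_2,c_3\}$, $\{c_3,c_4\}$. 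Consecutive lists around this $4$-cycle overlap in exactly one color, and these overlaps rotate through $c_1,c_2,c_3,c_4$; a one-line case split on whether the first rim vertex takes $c_1$ or $c_4$ then shows that \emph{every} proper coloring of the rim is rainbow, i.e.\ uses all four colors $c_1,c_2,c_3,c_4$. The hub, whose list is exactly $\{c_1,c_2,c_3,c_4\}$, then has no available color, a contradiction. This argument never uses $\alpha\neq\beta$, so the gadget forbids the ordered pair $(\alpha,\beta)$ for any choice of $\alpha,\beta$.

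Next I would check the union condition. Each of the four rim edges has union of size $5$, every edge incident to the hub has union of size $5$ or $6$, and if each terminal is assigned a $3$-list of the form $\{\alpha\}\cup\{x,y\}$ with $x,y\notin\{c_1,c_2,c_3,c_4\}$, then each terminal edge has union of size exactly $5$. To assemble the counterexample, take two non-adjacent vertices $a^{\ast},b^{\ast}$ with $L(a^{\ast})=L(b^{\ast})=\{1,2,3\}$ and, for each of the nine ordered pairs $(\alpha,\beta)\in\{1,2,3\}^2$, attach a copy of the gadget (with internal colors $c_1,c_2,c_3,c_4$ chosen outside $\{1,2,3\}$, which may be reused across copies since distinct gadgets share no edges) by identifying its terminals with $a^{\ast}$ and $b^{\ast}$. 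Because both terminals of each gadget lie on its outer face, the nine copies can be drawn in parallel ``lens'' regions between $a^{\ast}$ and $b^{\ast}$, keeping the whole graph planar. In any proper $L$-coloring the pair $(c(a^{\ast}),c(b^{\ast}))$ lies in $\{1,2,3\}^2$ and is therefore excluded by the corresponding gadget, so no proper $L$-coloring exists.

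The hard part is the gadget analysis: one must recognize the internal graph as a wheel $W_4$ and verify that forcing the terminals to their bad colors leaves the rim with precisely the rotating $2$-lists whose only proper colorings are rainbow, thereby starving the hub. Once this forcing lemma and the routine union-size checks are established, the planar parallel composition of the nine gadgets gives the result immediately.
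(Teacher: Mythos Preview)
Your gadget analysis is correct and matches the paper's, but there is a genuine gap in the assembly step. You give both terminals the \emph{same} list $L(a^\ast)=L(b^\ast)=\{1,2,3\}$ and then attach a gadget for every ordered pair $(\alpha,\beta)\in\{1,2,3\}^2$, including the three diagonal pairs $\alpha=\beta$. But when $\alpha=\beta$ the gadget's list assignment is not a $(3,5)$-list assignment: each of the four rim edges then has union of size only $4$. For instance the edge between the rim vertices with lists $\{\alpha,\beta,c_4,c_1\}$ and $\{\alpha,c_1,c_2\}$ has union $\{\alpha,\beta,c_1,c_2,c_4\}$, which collapses to four elements when $\alpha=\beta$. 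Your sentence ``each of the four rim edges has union of size $5$'' silently assumes $\alpha\neq\beta$, while your later remark that ``this argument never uses $\alpha\neq\beta$'' refers only to the forcing property, not to the union check. Since $a^\ast$ and $b^\ast$ are non-adjacent, you cannot simply drop the three diagonal gadgets either: nothing would then exclude a coloring with $c(a^\ast)=c(b^\ast)$.

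The paper's construction avoids exactly this issue by giving the two terminals \emph{disjoint} $3$-lists $A$ and $B$ (with $c_1,\dots,c_4\notin A\cup B$). One still attaches nine gadgets, one for each $(a,b)\in A\times B$, but now $a\neq b$ is automatic, so every rim edge has union of size $5$ and $L$ is a genuine $(3,5)$-list assignment. With that single change your argument goes through verbatim.
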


\begin{proof}
Let $A$ and $B$ be disjoint sets of size three, and let $c_1,\dots,c_4$ be distinct colors not in $A \cup B$.
Let $v_A$ and $v_B$ be two vertices and let $L(v_A) = A$ and $L(v_B) = B$.
For each $a \in A$ and $b \in B$, consider the graph displayed in Figure~\ref{fig:gadget35}; create a copy of this graph where the left vertex is $v_A$ and the right vertex is $v_B$.
Assign lists to the interior vertices of this graph using the colors $\{a,b,c_1,\dots,c_4\}$ as shown in the figure.
Observe that $L$ is a $(3,5)$-list assignment.
If there exists a proper $L$-coloring, then let $a \in A$ be the color on $v_A$ and $b \in B$ be the color on $v_B$ and consider the copy of this gadget using these colors.
Observe that in the 4-cycle induced by the neighbors of the center vertex, all four colors $c_1,\dots,c_4$ must be present.
Then the coloring is not proper as the center vertex is assigned one of these colors.
\end{proof}



\section{Reducible Configurations}\label{sec:reducible}

To prove all of our main results, we consider a minimum counterexample and arrive at a contradiction through discharging. In this section, we describe the structures that cannot appear in a minimum counterexample.



\begin{proposition}\label{reducible-1}
Let $G$ be a graph, $uv$ an edge in $G$, $t \geq k \geq 3$, and $a = |N(u) \cap N(v)|$ with $a \in \{0,1,2\}$.
Let  $L$ be a $(k,t)$-list assignment and suppose that there exist $L$-colorings of $G - u$, $G - v$, and $G - uv$.
If $d(u) + d(v) \leq t + a$, then there exists an $L$-coloring of $G$.
\end{proposition}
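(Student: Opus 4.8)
The plan is to build a proper $L$-coloring of $G$ by starting from the supplied $L$-coloring of $G - uv$ and recoloring only the two endpoints $u$ and $v$. Write $W = V(G) \setminus \{u,v\}$, let $c$ be the assumed $L$-coloring of $G - uv$, and let $\phi := c|_W$, which is a proper $L$-coloring of the induced subgraph $G[W]$ that I will keep fixed. It then suffices to choose colors for $u$ and $v$ that lie in their lists, differ from the already-colored neighbors in $W$, and differ from each other. To make this precise, set $F_u = \{\phi(w) : w \in N(u) \setminus \{v\}\}$ and $F_v = \{\phi(w) : w \in N(v) \setminus \{u\}\}$, and let $A = L(u) \setminus F_u$ and $B = L(v) \setminus F_v$ be the colors still available at $u$ and at $v$. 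Completing the coloring amounts to picking $c'(u) \in A$ and $c'(v) \in B$ with $c'(u) \neq c'(v)$; for a single edge this is possible precisely when $A$ and $B$ are both nonempty and $|A \cup B| \geq 2$.

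I would verify nonemptiness using the coloring $c$ itself: since $c$ is proper on $G - uv$, the color $c(u)$ differs from $\phi(w)$ for every $w \in N(u) \setminus \{v\}$, so $c(u) \in A$; symmetrically $c(v) \in B$. Hence $A$ and $B$ are both nonempty. For the bound $|A \cup B| \geq 2$ I would count forbidden colors. The vertices whose $\phi$-values make up $F_u \cup F_v$ are exactly those in $(N(u) \cup N(v)) \setminus \{u,v\}$, and by inclusion--exclusion with $|N(u) \cap N(v)| = a$ there are $d(u) + d(v) - 2 - a$ of them, so $|F_u \cup F_v| \leq d(u) + d(v) - 2 - a \leq (t + a) - 2 - a = t - 2$ by the hypothesis $d(u) + d(v) \leq t + a$. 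Since $|L(u) \cup L(v)| \geq t$, at least two colors of $L(u) \cup L(v)$ avoid $F_u \cup F_v$; each such color lies in $L(u)$ or in $L(v)$ and therefore in $A$ or in $B$, giving $|A \cup B| \geq 2$.

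With both conditions in hand, the edge $uv$ can be colored from the lists $A$ and $B$ with distinct colors, and together with $\phi$ this yields a proper $L$-coloring of $G$. The step I expect to be the main obstacle is nonemptiness of $A$ and $B$: a naive degree count gives only $|A| \geq k - d(u) + 1$, which may be nonpositive when $d(u)$ is large, so the key idea is to obtain nonemptiness not from the list sizes but from the supplied coloring of $G - uv$, whose own values $c(u), c(v)$ serve as witnesses. The union-separation bound $|L(u) \cup L(v)| \geq t$ then supplies the second available color needed to separate the two endpoints.

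I note in advance that this argument uses only $d(u)+d(v) \leq t+a$ together with $|L(u)\cup L(v)|\geq t$, so the restriction $a \in \{0,1,2\}$ does not actually enter the count; the hypothesized colorings of $G - u$ and $G - v$ also give an alternative route, namely recoloring a single endpoint, but since the coloring of $G - uv$ already furnishes the witnesses making $A$ and $B$ nonempty, I expect it to be the only one I need.
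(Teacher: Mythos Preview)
Your proof is correct and in fact cleaner than the paper's. Both arguments rest on the same count: after fixing the colors on $W=V(G)\setminus\{u,v\}$, the available sets $A=L(u)\setminus F_u$ and $B=L(v)\setminus F_v$ satisfy $|A\cup B|\geq |L(u)\cup L(v)|-|(N(u)\cup N(v))\setminus\{u,v\}|\geq t-(d(u)+d(v)-a-2)\geq 2$, after which distinct colors for $u$ and $v$ can be chosen. The difference is in how nonemptiness of $A$ and $B$ is established. The paper first eliminates the side cases $|L(u)|>d(u)$, $|L(v)|>d(v)$, and $L(u)\cap L(v)=\varnothing$ (invoking the colorings of $G-u$, $G-v$, and $G-uv$ respectively), then reduces to the situation $|L(u)|=d(u)$ and uses the coloring of $G-u$ to witness that both restricted lists are nonempty. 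You bypass all of this case analysis with the single observation that the given coloring $c$ of $G-uv$ already provides witnesses $c(u)\in A$ and $c(v)\in B$. As you correctly note, your route uses only the coloring of $G-uv$ and is valid for every $a\geq 0$; the paper's detour through all three auxiliary colorings gains nothing additional.
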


\begin{proof}
If $|L(u)| > d(u)$, then the $L$-coloring of $G-u$ extends to an $L$-coloring of $G$ as there is a color in $L(u)$ that does not appear among the neighbors of $u$; thus we assume $|L(u)| \leq d(u)$.
By a symmetric argument we may assume $|L(v)| \leq d(v)$.
If $L(u) \cap L(v) = \varnothing$, then the $L$-coloring of $G-uv$ is an $L$-coloring of $G$; thus we assume $|L(u)\cap L(v)| \geq 1$ and $|L(u)\cup L(v)| \leq |L(u)| + |L(v)| - 1 \leq d(u) + d(v) - 1$.


Note that $t+2 \geq d(u)+d(v) \geq |L(u)|+|L(v)|\geq t+1$.
Thus $a \geq 1$ and either $|L(u)| = d(u)$ or $|L(v)| = d(v)$; assume by symmetry that $|L(u)|=d(u)$.
Let $c$ be an $L$-coloring of $G - u$.
For $x \in \{u,v\}$, let $L'(x)$ be the colors in $L(x)$ that do not appear among the neighbors $y \in N(x) \setminus \{u,v\}$.
Since $c(v) \in L'(v)$, we have $L'(v) \neq \varnothing$.
Since $|L(u)| = d(u)$, we have $L'(u) \neq \varnothing$.
Observe that $|L'(u) \cup L'(v)| \geq |L(u)\cup L(v)| - |N(u)\cup N(v)| + 2 \geq t - (d(u) + d(v) - a) + 2 \geq 2$.
Thus either $|L'(u)|=|L'(v)|=1$ or $|L'(x)| \geq 2$ for some $x \in \{u,v\}$ and therefore there are choices for $c'(u) \in L'(u)$ and $c'(v) \in L'(v)$ such that $c'(u) \neq c'(v)$.
If $c'(y) = c(y)$ for all $y \in V(G) \setminus \{u,v\}$, then $c'$ is an $L$-coloring of $G$.
\end{proof}

\section{Sparse Graphs}\label{sec:sparse}

In this section, we determine a relationship between sparsity and choosability with union separation.

\begin{theorem}\label{thm:sparse}
Let $k \geq 2$ and $t \geq 2k-1$.
If $G$ is a graph with $\mad(G) < 2k\left(1-\frac{k}{t+1}\right)$, then $G$ is $(k,t)$-choosable.
\end{theorem}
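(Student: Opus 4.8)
The plan is to argue by contradiction using the discharging method, exactly as advertised in the introduction, with Proposition~\ref{reducible-1} as the single reducible configuration. Suppose $G$ is a minimal counterexample: a graph with $\mad(G) < 2k\left(1-\frac{k}{t+1}\right)$ that is not $(k,t)$-choosable, and fix a $(k,t)$-list assignment $L$ admitting no proper $L$-coloring, with $G$ having the fewest vertices (or edges) among all such examples. Minimality guarantees that $G-u$, $G-v$, and $G-uv$ are all $L$-colorable for every edge $uv$, so Proposition~\ref{reducible-1} applies directly. The key structural consequence I would extract is: for every edge $uv$ with $a=|N(u)\cap N(v)|\in\{0,1,2\}$, we must have $d(u)+d(v) \geq t+a+1$, since otherwise $G$ would be $L$-colorable. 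In particular, since every edge has $a\geq 0$, every edge $uv$ satisfies $d(u)+d(v) \geq t+1$. This means no two low-degree vertices can be adjacent; more precisely, a vertex of degree $d$ can only be adjacent to vertices of degree at least $t+1-d$.

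With this forbidden configuration in hand, the plan is to set up a discharging argument to contradict the sparsity hypothesis. I would assign each vertex $v$ the initial charge $\mu(v)=d(v)$, so that the total charge is $\sum_v d(v) = 2e(G)$, and the average charge is $\frac{2e(G)}{n(G)} \leq \mad(G) < 2k\left(1-\frac{k}{t+1}\right)$. The goal is to redistribute charge so that every vertex ends with charge at least $2k\left(1-\frac{k}{t+1}\right)$, forcing the average to meet or exceed this bound and yielding the contradiction. The natural discharging rule, given that the obstruction is ``low-degree vertices need high-degree neighbors,'' is to have low-degree vertices pull charge from their high-degree neighbors (or equivalently, high-degree vertices send charge along edges to low-degree endpoints). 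A clean formulation: call $v$ \emph{small} if $d(v) < 2k\left(1-\frac{k}{t+1}\right)$, i.e.\ roughly $d(v)$ below the target threshold, and have each vertex send charge to its small neighbors along incident edges. The threshold $\frac{t+1}{2}$ is the relevant breakpoint: the edge condition $d(u)+d(v)\geq t+1$ says a vertex of degree below $\frac{t+1}{2}$ has all neighbors of degree above $\frac{t+1}{2}$.

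For the computation I would verify two things. First, any vertex $v$ whose degree already meets the target threshold should retain enough charge after donating; since it starts with $d(v)$ and sends out a controlled amount per edge, I need the per-edge donation times $d(v)$ to leave at least the target. Second, any small vertex $v$ of degree $d$ has all its $d$ neighbors of degree $\geq t+1-d$, each of which is large enough to be a donor, so $v$ receives $d$ contributions and I must check $d + d\cdot(\text{donation}) \geq 2k\left(1-\frac{k}{t+1}\right)$. Choosing the donation amount so these balance, the algebra should hinge on the identity that the minimum of $d + d\cdot\delta$ over the feasible degree range is optimized precisely when the target is $2k\left(1-\frac{k}{t+1}\right)$; the appearance of $k$ and $\frac{k}{t+1}$ suggests the extremal case is a vertex of degree $k$ (which forces neighbors of degree $\geq t+1-k$) versus a vertex of degree near $\frac{t+1}{2}$.

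The main obstacle I expect is designing the discharging rule so that the charge transferred per edge is simultaneously large enough to lift every small vertex to the target and small enough that no donor drops below it. This is a genuine optimization: the worst case for the recipient is a degree-$k$ vertex needing a large boost, while the worst case for a donor is a vertex whose degree only barely exceeds the threshold and which must supply several small neighbors. A uniform per-edge donation may not suffice; I anticipate needing a rule whose donation depends on the degrees of both endpoints (for instance, sending $\frac{d(u)-\tau}{d(u)}$-type fractions where $\tau$ is the target, so each vertex keeps exactly $\tau$ and distributes its surplus proportionally among small neighbors). Pinning down the exact form of the rule and confirming it balances at the stated bound $2k\left(1-\frac{k}{t+1}\right)$ is the crux; once the rule is fixed, verifying the two inequalities is routine algebra driven entirely by the edge degree-sum bound $d(u)+d(v)\geq t+1$.
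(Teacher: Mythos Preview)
Your proposal is correct and matches the paper's approach exactly: minimal counterexample, the edge degree-sum bound $d(u)+d(v)\geq t+1$, and discharging with initial charge $d(v)$ toward the target $\mathfrak{c}=2k\bigl(1-\tfrac{k}{t+1}\bigr)$. The paper's rule is the receiver-based version of what you anticipate---each vertex $u$ with $d(u)<\mathfrak{c}$ pulls $\tfrac{\mathfrak{c}-d(u)}{d(u)}$ from every neighbor---after which the verification that donors stay above $\mathfrak{c}$ is precisely the routine quadratic computation you describe (one minor point: the paper derives $d(u)+d(v)\geq t+1$ directly from $d(x)\geq |L(x)|$ and $|L(u)\cap L(v)|\geq 1$ rather than via Proposition~\ref{reducible-1}, which as stated assumes $k\geq 3$).
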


\def\finalcharge{\mathfrak{c}}
\begin{proof}
Let $\finalcharge = 2k-\frac{2k^2}{t+1}$.
Observe that since $t\geq 2k-1$ that $\finalcharge \geq k$.
For the sake of contradiction, suppose there exists a graph $G$ with $\mad(G) < \finalcharge$ and a $(k,t)$-list assignment $L$ such that $G$ is not $L$-choosable.
Select $(G,L)$ among such pairs to minimize $n(G) + e(G)$.
Observe that $k\leq |L(v)| \leq d(v)$ for every vertex.

We use discharging to demonstrate $\mad(G) \geq \finalcharge$, a contradiction.
Assign charge $d(v)$ to every vertex $v$, so the total charge sum is equal to $2e(G)$.
We discharge using the following rule:

\begin{quote}
(R) If $u$ is a vertex with $d(u) < \finalcharge$, then $u$ pulls charge $\frac{\finalcharge-d(u)}{d(u)}$ from each neighbor of $u$.
\end{quote}

Suppose that $v$ is a vertex that loses charge by (R).
Then there exists an edge $uv \in E(G)$ where $d(u) < \finalcharge$.
Note that since $G-uv$ is $L$-choosable, $|L(u)\cap L(v)| \geq 1$ and $d(u)+d(v) \geq t+1$. It follows that
\[
d(v) \geq t+1-d(u) > t+1-\finalcharge \geq \finalcharge.
\]
Therefore, a vertex either loses charge by (R) or gains charge by (R), not both.

Observe that if $d(u) < \finalcharge$, then $u$ pulls enough charge by (R) to end with charge at least $\finalcharge$.

Finally, suppose $v$ is a vertex with $d(v) = d \geq \finalcharge$.

If $d \geq t+1-k$, then neighbors of $v$ pull charge at most $\frac{\finalcharge-k}{k}$ from $v$.
The final charge on $v$ is given by
\[
	d - d\left(\frac{\finalcharge -k}{k}\right)
	= d\left(\frac{2k-\finalcharge}{k}\right)
	= d\left(\frac{2k-(2k-\frac{2k^2}{t+1})}{k}\right)
	= d\left(\frac{2k}{t+1}\right)
	\geq (t+1-k)\frac{2k}{t+1} = \finalcharge.
\]

Now suppose that $d < t+1-k$.
If a vertex $u$ pulls charge from $v$ by (R), then $d(u) \geq d' = t+1-d$.
Thus, $v$ loses charge at most $\frac{\finalcharge-d'}{d'}$ to each neighbor.
The final charge on $v$ is given by
\begin{align*}
	d - d\left(\frac{\finalcharge - d'}{d'}\right)
	&= d \left(1-\frac{\finalcharge-d'}{d'}\right)\\
	&= d \left(\frac{2d'-\finalcharge}{d'}\right)\\
	&= d \left(\frac{2(t+1-d)-\finalcharge}{t+1-d}\right)
\end{align*}
Observe that $d \left(\frac{2(t+1-d)-\finalcharge}{t+1-d}\right) \geq \finalcharge$ if and only if $2(t+1-d)d - (t+1)\finalcharge \geq 0$.
By the quadratic formula, this polynomial (in $d$) has roots at $d \in \left\{ \frac{1}{2}\left(t+1 \pm \sqrt{(t+1)(t+1-2\finalcharge)}\right)\right\}$; the discriminant is nonnegative since $(t+1)((t+1)-2\finalcharge) =  (t+1-2k)^2$.
Thus, the final charge on $v$ is below $\finalcharge$ if and only if $d < k$ or $d > t+1-k$, but we are considering $d$ where $k \leq \finalcharge \leq d < t+1-k$.
\end{proof}

Note that Theorem~\ref{thm:sparse} implies that a graph $G$ is $(4,15)$-choosable when $\mad(G) < 8\left(1-\frac{4}{16}\right) = 6$.
If $G$ is planar, then $\mad(G) < 6$ and hence is $(4,15)$-choosable.
There is no $t$ such that Theorem~\ref{thm:sparse} implies all planar graphs are $(3,t)$-choosable.
We now directly consider planar graphs and find smaller separations suffice.


\section{$\chunion{4}{t}$-choosability}
\label{sec:4choosability}

\begin{proof}[Proof of Theorem~\ref{thm:49choosable}.]
Suppose $G$ is a plane graph minimizing $n(G)+e(G)$ such that $G$ is not $L$-colorable for some $(4,9)$-list assignment $L$.
By minimality of $G$, we can assume that $d(v) \geq |L(v)| \geq 4$ for all vertices $v$ and $|L(u) \cap L(v)| \geq 1$ for all adjacent pairs $uv$.
By Proposition~\ref{reducible-1},  if $uv$ is an edge in $G$, then $d(u) + d(v) > 9 + \min(|N(u)\cap N(v)|,2)$.
Observe that $\min(|N(u)\cap N(v)|,2)$ is at least the number of 3-faces incident to the edge $uv$.

For each $v \in V(G)$ and $f \in F(G)$ define $\mu(v) = d(v) - 4$ and $\nu(f) = \ell(f) - 4$. Note that the total initial charge of $G$ is $-8$.
For a vertex $v$, let $t_3(v)$ be the number of 3-faces incident to $v$.
Apply the following discharging rule.

\begin{enumerate}[({R}1)] \itemsep0pc
\item If $v$ is a $5^+$-vertex and $f$ is an incident 3-face, then $v$ sends charge $\frac{\mu(v)}{t_3(v)}$ to $f$.
\end{enumerate}

All vertices and $4^+$-faces have nonnegative charge after applying (R1).

Let $f$ be a 3-face with  incident vertices $u,v,w$ where $d(u) \leq d(v) \leq d(w)$.
Since $\nu(f) = -1$, if suffices to show that $f$ receives charge at least 1 in total from $u$, $v$, and $w$ by (R1).

If $d(u) \geq 6$, then $\frac{\mu(x)}{d(x)} \geq \frac13$ for all $x \in \{u,v,w\}$ and each vertex $u$, $v$, and $w$ sends charge at least $\frac13$, giving $f$ nonnegative final charge.

If $d(u) = 4$, then $d(w) \geq d(v) \geq 7$ since $d(u) + d(v) \geq 11$ by Proposition~\ref{reducible-1}.
If $d(v) \geq 8$, then each of $v$ and $w$ send charge at least $\frac12$, giving $f$ nonnegative final charge. 
Thus, suppose $d(v) = 7$.
Since $d(u) + d(v) = 11$, there is not another 3-face incident to the edge $uv$ by Proposition~\ref{reducible-1}.
Thus, $t_3(v) \leq 6$ and hence $v$ sends charge at least $\frac12$ to $f$.
Similarly, $w$ sends charge at least $\frac12$ so $f$ has nonnegative final charge.

If $d(u) = 5$, then $d(v) \geq 6$ since $d(u) + d(v) \geq 11$ by Proposition~\ref{reducible-1}.
If $d(v) \geq 7$, then vertex $u$ sends charge at least $\frac15$ and each of $v$ and $w$ send charge at least $\frac37$, giving $f$ nonnegative final charge.
If $d(v) = 6$, then there is not another 3-face incident to the edge $uv$ by Proposition~\ref{reducible-1}.
Thus, $t_3(v) \leq 5$ and $v$ sends charge at least $\frac25$. 
Similarly, $w$ sends charge at least $\frac25$ so $f$ has nonnegative final charge.

We conclude that all vertices and faces have nonnegative charge, so $G$ has nonnegative total charge, a contradiction.

\end{proof}

\begin{theorem}\label{thm:47choosable}
If $G$ is a planar graph and does not contain a chorded 4-cycle, then $G$ is $\chunion{4}{7}$-choosable.
\end{theorem}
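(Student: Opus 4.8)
The plan is to mirror the discharging argument used for Theorem~\ref{thm:49choosable}, exploiting the forbidden configuration to sharpen the structural control. I would take a plane graph $G$ minimizing $n(G)+e(G)$ that is not $L$-colorable for some $(4,7)$-list assignment $L$ and contains no chorded $4$-cycle. As before, minimality gives $d(v) \geq |L(v)| \geq 4$ for every vertex and $|L(u)\cap L(v)| \geq 1$ for every edge $uv$, and Proposition~\ref{reducible-1} applies to every edge since $G-u$, $G-v$, and $G-uv$ are all $L$-colorable.

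The first key observation is a reinterpretation of the hypothesis: a chorded $4$-cycle is exactly $K_4$ minus an edge, that is, two triangles sharing an edge. Hence no edge lies in two triangles, and in particular any two adjacent vertices $u,v$ have at most one common neighbor, so that $a := |N(u)\cap N(v)| \in \{0,1\}$ for every edge and Proposition~\ref{reducible-1} yields $d(u)+d(v) > 7+a$. I would then record the two consequences I need. First, every edge $uv$ on a $3$-face has $a=1$ and therefore $d(u)+d(v) \geq 9$; applied to the two smallest degrees on a $3$-face, this forces at least two of its three vertices to be $5^+$-vertices. Second, reading off the corners around a vertex $v$ in cyclic order, two consecutive triangular corners at $v$ would share a boundary edge through $v$ that then lies in two triangles; since this is forbidden, no two consecutive corners are $3$-faces, giving $t_3(v) \leq \lfloor d(v)/2\rfloor$.

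With these in hand the discharging is short. Set $\mu(v) = d(v)-4$ and $\nu(f)=\ell(f)-4$, so the total charge is $-8$, and use the single rule that every $5^+$-vertex $v$ sends $\frac{\mu(v)}{t_3(v)}$ to each incident $3$-face (as in (R1)). Every $5^+$-vertex then finishes with charge $0$, while every $4$-vertex and every $4^+$-face retains its nonnegative charge, so only $3$-faces remain. A $5^+$-vertex of degree $d$ sends each incident $3$-face at least $\frac{d-4}{\lfloor d/2\rfloor}$, and over $d \geq 5$ this is minimized at $d=5$, where it equals $\tfrac12$; since each $3$-face has at least two $5^+$-vertices, it receives at least $2\cdot\tfrac12 = 1 = -\nu(f)$ and ends nonnegative. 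Thus all charges are nonnegative, contradicting the total $-8$.

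The substance of the argument lies entirely in translating ``no chorded $4$-cycle'' into the two inequalities $a \leq 1$ and $t_3(v) \leq \lfloor d(v)/2\rfloor$; once the latter caps how many $3$-faces a vertex must feed, the per-face arithmetic is immediate. The point demanding the most care is the bound $t_3(v) \leq \lfloor d(v)/2\rfloor$, and specifically the use of the integer floor rather than $d(v)/2$: at $d=5$ the floor gives $2$ rather than $2.5$, which is exactly what makes the sent charge reach $\tfrac12$ instead of falling short. I would justify the bound through the cyclic order of corners around $v$ and check that it survives even when $G$ fails to be $2$-connected, since a $3$-face, being a triangle, still occupies a genuine corner and the count of non-consecutive triangular corners is unaffected.
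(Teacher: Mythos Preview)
Your proof is correct and follows essentially the same approach as the paper: same minimal counterexample setup, same charge function $\mu(v)=d(v)-4$, $\nu(f)=\ell(f)-4$, same single rule (R1), and the same two structural facts derived from the forbidden chorded $4$-cycle (no two $3$-faces share an edge, hence $t_3(v)\le\lfloor d(v)/2\rfloor$; and at most one $4$-vertex on any $3$-face via Proposition~\ref{reducible-1}). Your explicit observation that $|N(u)\cap N(v)|\le 1$ for every edge is a slight sharpening the paper leaves implicit, but the argument is otherwise identical.
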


\begin{proof}
Suppose $G$ is a plane graph minimizing $n(G)+e(G)$ such that $G$ does not contain a chorded 4-cycle and $G$ is not $L$-colorable for some $(4,7)$-list assignment $L$.
By minimality of $G$, we can assume that $d(v) \geq |L(v)| \geq 4$ for all vertices $v$ and $|L(u) \cap L(v)| \geq 1$ for all adjacent pairs $uv$.
In particular, no two adjacent vertices of degree $4$ share a $3$-face by Proposition~\ref{reducible-1}. 
Let the initial charge of a vertex be $d(v)-4$ and that of a face be $\ell(f)-4$. 
Note that the total initial charge of $G$ is $-8$. 
For a vertex $v$, let $t_3(v)$ be the number of 3-faces incident to $v$.
Apply the following discharging rule.
\begin{enumerate}[({R}1)] \itemsep0pc
\item If $v$ is a $5^+$-vertex and $f$ is an incident 3-face, then $v$ sends charge $\frac{\mu(v)}{t_3(v)}$ to $f$.
\end{enumerate}
Since chorded $4$-cycles are forbidden, no two $3$-faces can share an edge. 
Hence for each vertex $v \in V(G)$, there are at most $\lfloor \frac{d(v)}2\rfloor$ $3$-faces incident to $v$.
It follows that vertices of degree at least $5$ send charge at least $\frac12$ to each incident $3$-face. 
Since a $3$-face has at most one incident $4$-vertex by Proposition~\ref{reducible-1}, all $3$-faces have nonnegative final charge after (R1).
Hence all vertices and faces have nonnegative final charge, so $G$ has nonnegative total charge, a contradiction.
\end{proof}

%
%
%
%


\section{$\chunion{3}{11}$-choosability}

\begin{proof}[Proof of Theorem~\ref{thm:311choosable}.]
Suppose $G$ is a plane graph minimizing $n(G)+e(G)$ such that $G$ is not $L$-colorable for some $(3,11)$-list assignment $L$.
By minimality of $G$, we can assume that $d(v) \geq |L(v)| \geq 3$ for all vertices $v$ and $|L(u) \cap L(v)| \geq 1$ for all adjacent pairs $uv$.
By Proposition~\ref{reducible-1},  if $uv$ is an edge in $G$, then $d(u) + d(v) > 11 + \min(|N(u)\cap N(v)|,2)$.
Observe that $\min(|N(u)\cap N(v)|,2)$ is at least the number of 3-faces incident to the edge $uv$.

For each $v \in V(G)$ and $f \in F(G)$ define initial charge functions $\mu(v) = d(v) - 6$ and $\nu(f) = 2\ell(f) - 6$.
By Euler's formula, total charge is $-12$.
Apply the following discharging rules:

\begin{enumerate}[({R}1)] 
\item Let $v$ be a vertex and $u \in N(v)$.
\begin{enumerate}[(a)]
\item If $d(v) = 3$, then $v$ pulls charge $1$ from $u$.
\item If $d(v) = 4$, then $v$ pulls charge $\frac{1}{2}$ from $u$.
\item If $d(v) = 5$, then $v$ pulls charge $\frac{1}{5}$ from $u$.
\end{enumerate}
\item If $f$ is a $4^+$-face and $uv$ is an edge incident to $f$ with $d(u) \leq 5$, then $f$ sends charge $\frac12$ to $v$.
\end{enumerate}

We claim the final charge on all faces and vertices is nonnegative.
Since the total charge sum was preserved during the discharging rules, this contradicts the negative initial charge sum.
Observe that no two $5^-$-vertices are adjacent by Proposition~\ref{reducible-1}, so each face $f$ is incident to at most $\frac{\ell(f)}2$ vertices of degree at most five.
If $f$ is a $3$-face, then $f$ does not lose charge.
If $f$ is a $4^+$-face, then $f$ loses charge at most 1 per incident $5^-$-vertex.
We have $\frac{\ell(f)}2 \leq 2\ell(f)-6$ whenever $\ell(f) \geq 4$, so $f$ has nonnegative final charge.

Each $5^-$-vertex gains exactly enough charge through (R1) so that the final charge is nonnegative.

Suppose $v$ is a $6^+$-vertex.
We introduce some notation to describe the structure near $v$.
For an edge $uv$, let $a(uv)$ be the number of $3$-faces incident to the edge $uv$.
Note that if $d(u) < 6$ and $a(uv) = 0$, then $v$ sends charge at most 1 to $u$ by (R1) and gains charge at least 1 via $uv$ by (R2), giving a nonnegative net difference in charge.
Thus, if $v$ ends with negative charge, it must be due to some number of $5^-$-vertices $u \in N(v)$ with $a(uv) > 0$.

For $k \in \{3,4,5\}$, let $D_k$ be the set of neighbors $u$ of $v$ such that $u$ is a $k$-vertex and $a(uv) = 2$; let $d_k = |D_k|$.
Let $D_3^*$ be the set of neighbors $u$ of $v$ such that $u$ is a $3$-vertex and $a(uv) = 1$; let $d_3^* = |D_3^*|$.
If $u \in D_k$, then $v$ gains no charge via $uv$ in (R2).
If $u \in D_3^*$, then $v$ loses charge 1 to $u$ in (R1) but gains charge $\frac12$ via $uv$ in (R2).
Therefore, the final charge of $v$ at least $\mu(v) - d_3 - \frac12d_3^* - \frac12d_4 - \frac15d_5$.
Recall $\mu(v) = d(v) - 6$, so if $v$ has negative final charge, then
\begin{equation}
d_3 + \frac12d_3^* + \frac12d_4 + \frac15d_5 > d(v) - 6. \label{eq:1}
\end{equation}

Let $D = D_3\cup D_3^* \cup D_4\cup D_5$.
For each $3$-face $uvw$ incident to $v$, at most one of $u,w$ is in $D$.
If $u \in D$, $w \in N(v) \setminus D$, and $uvw$ is a 3-face, then $u$ gives a \emph{strike} to $w$.
Each vertex in $D_3\cup D_4\cup D_5$ contributes two strikes, and each vertex in $D_3^*$ contributes one strike.
The total number of strikes is $2d_3+d_3^*+2d_4+2d_5$ and each vertex $w \in N(v) \setminus D$ receives at most two strikes, so $2d_3 + d_3^* + 2d_4 + 2d_5 \leq 2(d(v) - (d_3+d_3^*+d_4+d_5))$.
Equivalently,
\begin{equation}
2d_3+\frac32 d_3^*+2d_4+2d_5 \leq d(v). \label{eq:2}
\end{equation}
We now have $d(v) \geq 6$ and the two inequalities (\ref{eq:1}) and (\ref{eq:2}).
Also recall that since $d(u) + d(v) > 11 + \min(|N(u) \cap N(v)|,2)$, we have the following implications: if $d(v) \leq 10$ then $d_3 = 0$; if $d(v) \leq 9$, then $d_3^* + d_4 = 0$; if $d(v) \leq 8$, then $d_5 = 0$.

If we subtract (\ref{eq:1}) from (\ref{eq:2}), then we find the following inequality.
\begin{equation}
d_3 + d_3^* + \frac{3}{2}d_4 + \frac{9}{5}d_5 < 6. \label{eq:3}
\end{equation}
There are 77 tuples $(d_3,d_3^*, d_4, d_5)$ of nonnegative integers that satisfy (\ref{eq:3}); see Appendix~\ref{sec:tuples} for the full list.
None of these tuples admit a value $d(v)$ that satisfies (\ref{eq:1}) and the implications.
Therefore, there is no $6^+$-vertex $v$ with negative final charge.
We conclude that all vertices and faces have nonnegative final charge.
But total charge is $-12$, a contradiction.
Thus a minimum counterexample does not exist and all planar graphs are $\chunion{3}{11}$-choosable.
\end{proof}


%


{\small
\bibliographystyle{abbrv}
\bibliography{choosability}

\begin{thebibliography}{10}

\bibitem{DMWS14}
Z.~Berikkyzy, C.~Cox, M.~Dairyko, K.~Hogenson, M.~Kumbhat, B.~Lidick\'y,
  K.~Messerschmidt, K.~Moss, K.~Nowak, K.~F. Palmowski, and D.~Stolee.
\newblock $(4,2)$-choosability of planar graphs with forbidden structures,
  2015.
\newblock Available as arXiv:1512.03787 [math.CO].

\bibitem{BorodinSurvey}
O.~V. Borodin.
\newblock Colorings of plane graphs: a survey.
\newblock {\em Discrete Mathematics}, 313(4):517--539, 2013.

\bibitem{borodin}
O.~V. Borodin and A.~O. Ivanova.
\newblock Planar graphs without triangular 4-cycles are 4-choosable.
\newblock {\em Sib. \'Elektron. Mat. Izv.}, 5:75--79, 2008.

\bibitem{choi}
I.~Choi, B.~Lidick\'y, and D.~Stolee.
\newblock On choosability with separation of planar graphs with forbidden
  cycles.
\newblock {\em Journal of Graph Theory}.
\newblock to appear.

\bibitem{CW}
D.~Cranston and D.~B. West.
\newblock A guide to the discharging method.
\newblock Available as arXiv:1306.4434 [math.CO].

\bibitem{erdos1979choosability}
P.~Erd{\"o}s, A.~L. Rubin, and H.~Taylor.
\newblock Choosability in graphs.
\newblock {\em Congr. Numer}, 26:125--157, 1979.

\bibitem{furedi}
Z.~F{\"u}redi, A.~Kostochka, and M.~Kumbhat.
\newblock Choosability with separation of complete multipartite graphs and
  hypergraphs.
\newblock {\em Journal of Graph Theory}, 76(2):129--137, 2014.

\bibitem{kl}
H.~Kierstead and B.~Lidick\'y.
\newblock On choosability with separation of planar graphs with lists of
  different sizes.
\newblock {\em Discrete Mathematics}, 339(10):1779--1783, 2015.

\bibitem{krat2}
J.~Kratochv\'il and Z.~Tuza.
\newblock Algorithmic complexity of list colorings.
\newblock {\em Discrete Applied Mathematics}, 50(3):297--302, 1994.

\bibitem{krat}
J.~Kratochv\'il, Z.~Tuza, and M.~Voigt.
\newblock Brooks-type theorems for choosability with separation.
\newblock {\em Journal of Graph Theory}, 27(1):43--49, 1998.

\bibitem{thomassen}
C.~Thomassen.
\newblock Every planar graph is 5-choosable.
\newblock {\em Journal of Combinatorial Theory, Series B}, 62(1):180--181,
  1994.

\bibitem{Vizing}
V.~G. Vizing.
\newblock Coloring the vertices of a graph in prescribed colors.
\newblock {\em Diskret. Analiz}, 29:3--10, 1976.
\newblock (In Russian).

\bibitem{voigt1993list}
M.~Voigt.
\newblock List colourings of planar graphs.
\newblock {\em Discrete Mathematics}, 120(1):215--219, 1993.

\bibitem{voigt}
M.~Voigt.
\newblock A not 3-choosable planar graph without 3-cycles.
\newblock {\em Discrete Mathematics}, 146(1-3):325--328, 1995.

\bibitem{skrekovski}
R.~\v{S}krekovski.
\newblock A note on choosability with separation for planar graphs.
\newblock {\em Ars Combinatoria}, 58:169--174, 2001.

\end{thebibliography}
}

\clearpage
\appendix
\section{List of Tuples}\label{sec:tuples}

The following list of tuples $(d_3,d_3^*,d_4,d_5)$ satisfy inequality (\ref{eq:3}).
Recall that $d(v) \geq 6$, $d_5 > 0$ implies $d(v) \geq 9$, $d_3^*+d_4 > 0$ implies $d(v) \geq 10$, and $d_3 > 0$ implies $d(v) \geq 11$.
After these implications are applied, we find that the tuple $(d_3,d_3^*, d_4,d_5,d(v))$ violates inequality (\ref{eq:1}).

\begin{multicols}{3}
\footnotesize\noindent
$(0,0,0,0)$ fails (\ref{eq:1}) for $d(v) \geq 6$.\\
$(0,0,0,1)$ fails (\ref{eq:1}) for $d(v) \geq 9$.\\
$(0,0,0,2)$ fails (\ref{eq:1}) for $d(v) \geq 9$.\\
$(0,0,0,3)$ fails (\ref{eq:1}) for $d(v) \geq 9$.\\
$(0,0,1,0)$ fails (\ref{eq:1}) for $d(v) \geq 10$.\\
$(0,0,1,1)$ fails (\ref{eq:1}) for $d(v) \geq 10$.\\
$(0,0,1,2)$ fails (\ref{eq:1}) for $d(v) \geq 10$.\\
$(0,0,2,0)$ fails (\ref{eq:1}) for $d(v) \geq 10$.\\
$(0,0,2,1)$ fails (\ref{eq:1}) for $d(v) \geq 10$.\\
$(0,0,3,0)$ fails (\ref{eq:1}) for $d(v) \geq 10$.\\
$(0,1,0,0)$ fails (\ref{eq:1}) for $d(v) \geq 10$.\\
$(0,1,0,1)$ fails (\ref{eq:1}) for $d(v) \geq 10$.\\
$(0,1,0,2)$ fails (\ref{eq:1}) for $d(v) \geq 10$.\\
$(0,1,1,0)$ fails (\ref{eq:1}) for $d(v) \geq 10$.\\
$(0,1,1,1)$ fails (\ref{eq:1}) for $d(v) \geq 10$.\\
$(0,1,2,0)$ fails (\ref{eq:1}) for $d(v) \geq 10$.\\
$(0,1,2,1)$ fails (\ref{eq:1}) for $d(v) \geq 10$.\\
$(0,1,3,0)$ fails (\ref{eq:1}) for $d(v) \geq 10$.\\
$(0,2,0,0)$ fails (\ref{eq:1}) for $d(v) \geq 10$.\\
$(0,2,0,1)$ fails (\ref{eq:1}) for $d(v) \geq 10$.\\
$(0,2,0,2)$ fails (\ref{eq:1}) for $d(v) \geq 10$.\\
$(0,2,1,0)$ fails (\ref{eq:1}) for $d(v) \geq 10$.\\
$(0,2,1,1)$ fails (\ref{eq:1}) for $d(v) \geq 10$.\\
$(0,2,2,0)$ fails (\ref{eq:1}) for $d(v) \geq 10$.\\
$(0,3,0,0)$ fails (\ref{eq:1}) for $d(v) \geq 10$.\\
$(0,3,0,1)$ fails (\ref{eq:1}) for $d(v) \geq 10$.\\
$(0,3,1,0)$ fails (\ref{eq:1}) for $d(v) \geq 10$.\\
$(0,4,0,0)$ fails (\ref{eq:1}) for $d(v) \geq 10$.\\
$(0,4,0,1)$ fails (\ref{eq:1}) for $d(v) \geq 10$.\\
$(0,4,1,0)$ fails (\ref{eq:1}) for $d(v) \geq 10$.\\
$(0,5,0,0)$ fails (\ref{eq:1}) for $d(v) \geq 10$.\\
$(1,0,0,0)$ fails (\ref{eq:1}) for $d(v) \geq 11$.\\
$(1,0,0,1)$ fails (\ref{eq:1}) for $d(v) \geq 11$.\\
$(1,0,0,2)$ fails (\ref{eq:1}) for $d(v) \geq 11$.\\
$(1,0,1,0)$ fails (\ref{eq:1}) for $d(v) \geq 11$.\\
$(1,0,1,1)$ fails (\ref{eq:1}) for $d(v) \geq 11$.\\
$(1,0,2,0)$ fails (\ref{eq:1}) for $d(v) \geq 11$.\\
$(1,0,2,1)$ fails (\ref{eq:1}) for $d(v) \geq 11$.\\
$(1,0,3,0)$ fails (\ref{eq:1}) for $d(v) \geq 11$.\\
$(1,1,0,0)$ fails (\ref{eq:1}) for $d(v) \geq 11$.\\
$(1,1,0,1)$ fails (\ref{eq:1}) for $d(v) \geq 11$.\\
$(1,1,0,2)$ fails (\ref{eq:1}) for $d(v) \geq 11$.\\
$(1,1,1,0)$ fails (\ref{eq:1}) for $d(v) \geq 11$.\\
$(1,1,1,1)$ fails (\ref{eq:1}) for $d(v) \geq 11$.\\
$(1,1,2,0)$ fails (\ref{eq:1}) for $d(v) \geq 11$.\\
$(1,2,0,0)$ fails (\ref{eq:1}) for $d(v) \geq 11$.\\
$(1,2,0,1)$ fails (\ref{eq:1}) for $d(v) \geq 11$.\\
$(1,2,1,0)$ fails (\ref{eq:1}) for $d(v) \geq 11$.\\
$(1,3,0,0)$ fails (\ref{eq:1}) for $d(v) \geq 11$.\\
$(1,3,0,1)$ fails (\ref{eq:1}) for $d(v) \geq 11$.\\
$(1,3,1,0)$ fails (\ref{eq:1}) for $d(v) \geq 11$.\\
$(1,4,0,0)$ fails (\ref{eq:1}) for $d(v) \geq 11$.\\
$(2,0,0,0)$ fails (\ref{eq:1}) for $d(v) \geq 11$.\\
$(2,0,0,1)$ fails (\ref{eq:1}) for $d(v) \geq 11$.\\
$(2,0,0,2)$ fails (\ref{eq:1}) for $d(v) \geq 11$.\\
$(2,0,1,0)$ fails (\ref{eq:1}) for $d(v) \geq 11$.\\
$(2,0,1,1)$ fails (\ref{eq:1}) for $d(v) \geq 11$.\\
$(2,0,2,0)$ fails (\ref{eq:1}) for $d(v) \geq 11$.\\
$(2,1,0,0)$ fails (\ref{eq:1}) for $d(v) \geq 11$.\\
$(2,1,0,1)$ fails (\ref{eq:1}) for $d(v) \geq 11$.\\
$(2,1,1,0)$ fails (\ref{eq:1}) for $d(v) \geq 11$.\\
$(2,2,0,0)$ fails (\ref{eq:1}) for $d(v) \geq 11$.\\
$(2,2,0,1)$ fails (\ref{eq:1}) for $d(v) \geq 11$.\\
$(2,2,1,0)$ fails (\ref{eq:1}) for $d(v) \geq 11$.\\
$(2,3,0,0)$ fails (\ref{eq:1}) for $d(v) \geq 11$.\\
$(3,0,0,0)$ fails (\ref{eq:1}) for $d(v) \geq 11$.\\
$(3,0,0,1)$ fails (\ref{eq:1}) for $d(v) \geq 11$.\\
$(3,0,1,0)$ fails (\ref{eq:1}) for $d(v) \geq 11$.\\
$(3,1,0,0)$ fails (\ref{eq:1}) for $d(v) \geq 11$.\\
$(3,1,0,1)$ fails (\ref{eq:1}) for $d(v) \geq 11$.\\
$(3,1,1,0)$ fails (\ref{eq:1}) for $d(v) \geq 11$.\\
$(3,2,0,0)$ fails (\ref{eq:1}) for $d(v) \geq 11$.\\
$(4,0,0,0)$ fails (\ref{eq:1}) for $d(v) \geq 11$.\\
$(4,0,0,1)$ fails (\ref{eq:1}) for $d(v) \geq 11$.\\
$(4,0,1,0)$ fails (\ref{eq:1}) for $d(v) \geq 11$.\\
$(4,1,0,0)$ fails (\ref{eq:1}) for $d(v) \geq 11$.\\
$(5,0,0,0)$ fails (\ref{eq:1}) for $d(v) \geq 11$.\\
\end{multicols}

\end{document}